\documentclass[12pt,english,a4paper]{smfart}
%
%
\usepackage[T1]{fontenc}
\usepackage{lmodern}
\usepackage{smfthm}
\usepackage[headings]{fullpage}
\usepackage{amssymb}

\newcommand{\Qp}{\mathbf{Q}_p}
\newcommand{\Zp}{\mathbf{Z}_p}
\newcommand{\Cp}{\mathbf{C}_p}
\newcommand{\ZZ}{\mathbf{Z}}
\newcommand{\QQ}{\mathbf{Q}}

\newcommand{\OO}{\mathcal{O}}
\newcommand{\MM}{\mathfrak{m}}
\newcommand{\Inv}{\mathrm{U}}
\newcommand{\val}{\mathrm{val}}
\newcommand{\End}{\mathrm{End}}
\newcommand{\Hol}{\mathcal{H}}
\newcommand{\Log}{\mathrm{L}}
\newcommand{\dcroc}[1]{[\![ #1 ]\!]}
\newcommand{\wideg}{\operatorname{wideg}}

\renewcommand{\geq}{\geqslant}
\renewcommand{\leq}{\leqslant} 

\author{Laurent Berger}
\address{UMPA de l'ENS de Lyon \\
UMR 5669 du CNRS}
\email{laurent.berger@ens-lyon.fr}
\urladdr{perso.ens-lyon.fr/laurent.berger/}

\date{\today}

\title[Nonarchimedean dynamical systems and formal groups]{Nonarchimedean dynamical systems and formal groups}

\begin{document}

\begin{abstract}
We prove two theorems that confirm an observation of Lubin concerning families of $p$-adic power series that commute under composition: under certain conditions, there is a formal group such that the power series in the family are either endomorphisms of this group, or semi-conjugate to endomorphisms of this group.
\end{abstract}

\subjclass{11S82 (11S31; 32P05)}

\keywords{Nonarchimedean dynamical system; formal group; $p$-adic analysis}

\maketitle

\setlength{\baselineskip}{18pt}

\section*{Introduction}\label{intro}

Let $K$ be a finite extension of $\Qp$, and let $\OO_K$ be its ring of integers and $\MM_K$ the maximal ideal of $\OO_K$. In \cite{L94}, Lubin studied \emph{nonarchimedean dynamical systems}, namely families of elements of $X \cdot \OO_K \dcroc{X}$ that commute under composition, and remarked (page 341 of ibid.) that ``experimental evidence seems to suggest that for an invertible series to commute with a noninvertible series, there must be a formal group somehow in the background''. Various results in that direction have been obtained (by Hsia, Laubie, Li, Movahhedi, Salinier, Sarkis, Specter, ...; see for instance \cite{Li96}, \cite{Li97}, \cite{LiTAMS}, \cite{LMS02}, \cite{GS05}, \cite{GS10}, \cite{SS13}, \cite{HL},  \cite{LB16}, \cite{JS}), using either $p$-adic analysis, the theory of the field of norms or, more recently, $p$-adic Hodge theory. The purpose of this article is to prove two theorems that confirm the above observation in many new cases, using only $p$-adic analysis.

If $g(X) \in X \cdot \OO_K \dcroc{X}$, we say that $g$ is \emph{invertible} if $g'(0) \in \OO_K^\times$ and \emph{noninvertible} if $g'(0) \in \MM_K$. We say that $g$ is \emph{stable} if $g'(0)$ is neither $0$ nor a root of unity. For example, if $S$ is a formal group of finite height over $\OO_K$ and if $c \in \ZZ$ with $p \nmid c$ and $c \neq \pm 1$, then $f(X)=[p](X)$ and $u(X)=[c](X)$ are two stable power series, with $f$ noninvertible and $u$ invertible, having the following properties: the roots of $f$ and all of its iterates are simple, $f \not\equiv 0 \bmod{\MM_K}$ and $f \circ u= u \circ f$. Our first result is a partial  converse of this. If $f(X) \in X \cdot \OO_K \dcroc{X}$, let $\Inv_f$ denote the set of invertible power series $u(X) \in X \cdot \OO_K \dcroc{X}$ such that $f \circ u= u \circ f$, and let $\Inv_f'(0) = \{ u'(0), \ u \in \Inv_f\}$. This is a subgroup of $\OO_K^\times$.

\begin{enonce*}{Theorem A}
Let $K$ be a finite extension of $\Qp$ such that $e(K/\Qp) \leq p-1$, and let $f(X) \in X \cdot \OO_K \dcroc{X}$ be a noninvertible stable series. Suppose that 
\begin{enumerate}
\item the roots of $f$ and all of its iterates are simple, and $f \not\equiv 0 \bmod{\MM_K}$;
\item there is a subfield $F$ of $K$ such that $f'(0) \in \MM_F$ and such that $\Inv_f'(0) \cap \OO_F^\times$ is an open subgroup of $\OO_F^\times$.
\end{enumerate}
Then there is a formal group $S$ over $\OO_K$ such that $f \in \End(S)$ and $\Inv_f \subset \End(S)$.
\end{enonce*}

Condition (1) can be checked using the following criterion (proposition \ref{critsimp}).

\begin{enonce*}{Criterion A}
If $f(X) \in X \cdot \OO_K \dcroc{X}$ is a noninvertible stable series with $f \not\equiv 0 \bmod{\MM_K}$, and if $f$ commutes with a stable invertible series $u(X) \in X \cdot \OO_K \dcroc{X}$, then the roots of $f$ and all of its iterates are simple if and only if $f'(X)/f'(0) \in 1 + X \cdot \OO_K \dcroc{X}$. 
\end{enonce*}

If $K=\Qp$, condition (2) of Theorem A amounts to requiring the existence of a stable invertible series that commutes with $f$.

\begin{enonce*}{Corollary A}
If $f(X) \in X \cdot \Zp \dcroc{X}$ is a noninvertible stable series such that the roots of $f$ and  all of its iterates are simple and $f \not\equiv 0 \bmod{p}$, and if $f$ commutes with a stable invertible series $u(X) \in X \cdot \Zp \dcroc{X}$, then there is a formal group $S$ over $\Zp$ such that $f \in \End(S)$ and $\Inv_f \subset \End(S)$.
\end{enonce*}

There are examples of commuting power series where $f$ does not have simple roots, for instance $f(X)=9X+6X^2+X^3$ and $u(X)=4X+X^2$ with $K=\QQ_3$ (more examples can be constructed following the discussion on page 344 of \cite{L94}). It seems reasonable to expect that if $f$ and $u$ are two stable noninvertible and invertible power series that commute, with $f \not\equiv 0 \bmod{\MM_K}$, then there exists a formal group $S$, two endomorphisms $f_S$ and $u_S$ of $S$, and a nonzero power series $h$ such that $f \circ h = h \circ f_S$ and $u \circ h = h \circ u_S$. We then say that $f$ and $f_S$ are \emph{semi-conjugate}, and $h$ is an \emph{isogeny} from $f_S$ to $f$ (see for instance \cite{Li97}). 

The simplest case where this occurs is when $m$ is an integer $\geq 2$, and the nonzero roots of $f$ and all of its iterates are of multiplicity $m$ (for an example of a more complicated case, see remark \ref{chebytwo}). In this simplest case, we have the following.

\begin{enonce*}{Theorem B}
Let $K$ be a finite extension of $\Qp$, let $f(X) \in X \cdot \OO_K \dcroc{X}$ be a noninvertible stable series and take $m \geq 2$. Let $h(X)=X^m$. Suppose that 
\begin{enumerate}
\item the nonzero roots of $f$ and all of its iterates are of multiplicity $m$ 
\item $f \not\equiv 0 \bmod{\MM_K}$.
\end{enumerate}

Then there exists a finite unramified extension $L$ of $K$ and a noninvertible stable series $f_0(X) \in X \cdot \OO_L \dcroc{X}$ with $f_0 \not\equiv 0 \bmod{\MM_K}$, such that $f \circ h = h \circ f_0$, and the roots of $f_0$ and all of its iterates are simple.

If in addition $u$ is an element of $\Inv_f$ with $u'(0) \equiv 1 \bmod{\MM_K}$, then there exists $u_0 \in \Inv_{f_0}$ such that $u \circ h = h \circ u_0$. Finally, if there is a subfield $F$ of $K$ such that $f'(0) \in \MM_F$ and such that $\Inv_f'(0) \cap \OO_F^\times$ is an open subgroup of $\OO_F^\times$, then $(f_0^{\circ m})'(0) \in \MM_F$ and $\Inv_{f_0}'(0) \cap \OO_F^\times$ is an open subgroup of $\OO_F^\times$.
\end{enonce*}

Condition (1) can be checked using the following criterion (proposition \ref{critmultm}).

\begin{enonce*}{Criterion B}
If $f(X) \in X \cdot \OO_K \dcroc{X}$ is a noninvertible stable series with $f \not\equiv 0 \bmod{\MM_K}$, and if $f$ commutes with a stable invertible series $u(X) \in X \cdot \OO_K \dcroc{X}$, then the nonzero roots of $f$ and all of its iterates are of multiplicity $m$ if and only if the nonzero roots of $f$ are of multiplicity $m$, and the set of roots of $f'$ is included in the set of roots of $f$.
\end{enonce*}

We have the following simple corollary of Theorem B when $K=\Qp$.

\begin{enonce*}{Corollary B}
If $m \geq 2$ and $f(X) \in X \cdot \Zp \dcroc{X}$ is a noninvertible stable series such that the nonzero roots of $f$ and all of its iterates are of multiplicity $m$ and $f \not\equiv 0 \bmod{p}$, and if $f$ commutes with a stable invertible series $u(X) \in X \cdot \Zp \dcroc{X}$, then there is an unramified extension $L$ of $\Qp$, a formal group $S$ over $\OO_L$ and $f_S \in \End(S)$ such that $f \circ X^m = X^m \circ f_S$.
\end{enonce*}

Theorem A implies conjecture 5.3 of \cite{HL} for those $K$ such that $e(K/\Qp) \leq p-1$. It also provides a new simple proof (that does not use $p$-adic Hodge theory) of the main theorem of \cite{JS}. Note also that Theorem A holds without the restriction ``$e(K/\Qp) \leq p-1$'' if $f'(0)$ is a uniformizer of $\OO_K$ (see \cite{JS17}). This implies ``Lubin's conjecture'' formulated at the very end of \cite{GS10} (this conjecture is proved in \cite{LB16} using $p$-adic Hodge theory, when $K$ is a finite Galois extension of $\Qp$) as well as ``Lubin's conjecture'' on page 131 of \cite{GS05} over $\Qp$ if $f \not\equiv 0 \bmod{p}$.

The results of \cite{HL}, \cite{LB16} and \cite{JS} are proved under strong additional assumptions on $\wideg(f)$ (namely that $\wideg(f)=p$ in \cite{JS}, or that $\wideg(f)=p^h$, where $h$ is the residual degree of $K$, in \cite{HL} and \cite{LB16}). Theorem A is the first general result in this direction that makes no assumption on $\wideg(f)$, besides assuming that it is finite. It also does not assume that $f'(0)$ is a uniformizer of $\OO_K$.

Theorem A and its corollary are proved in section \S\ref{proof} and theorem B and its corollary are proved in section \S\ref{condens}. 

\section{Nonarchimedean dynamical systems}
\label{serlub}

Whenever we talk about the roots of a power series, we mean its roots in the $p$-adic open unit disk $\MM_{\Cp}$. Recall that the \emph{Weierstrass degree} $\wideg(g(X))$ of a series $g(X) = \sum_{i \geq1} g_i X^i \in X \cdot \OO_K \dcroc{X}$ is the smallest integer $i \leq +\infty$ such that $g_i \in \OO_K^\times$. We have $\wideg(g) = +\infty$ if and only if $g \equiv 0 \bmod{\MM_K}$.

If $r<1$, let $\Hol(r)$ denote the set of power series in $K\dcroc{X}$ that converge on the closed disk $\{ z \in \MM_{\Cp}$ such that $|z|_p \leq r\}$. If $h \in \Hol(r)$, let $\| h \|_r = \sup_{|z|_p \leq r} |h(z)|_p$. The space $\Hol(r)$ is complete for the norm $\|{\cdot}\|_r$. Let $\Hol = \projlim_{r<1} \Hol(r)$ be the ring of holomorphic functions on the open unit disk.

Throughout this article, $f(X) \in X \cdot \OO_K \dcroc{X}$ is a stable noninvertible series such that $\wideg(f) < + \infty$, and $\Inv_f$ denotes the set of invertible power series $u(X) \in X \cdot \OO_K \dcroc{X}$ such that $f \circ u= u \circ f$. 

\begin{lemm}
\label{lubcom}
A series $g(X) \in X \cdot K \dcroc{X}$ that commutes with $f$ is determined by $g'(0)$.
\end{lemm}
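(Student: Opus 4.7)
The plan is to prove the contrapositive: if $g_1, g_2 \in X \cdot K\dcroc{X}$ both commute with $f$ and satisfy $g_1'(0) = g_2'(0) = \alpha$, then $g_1 = g_2$. I would argue by contradiction. Suppose $d := g_1 - g_2$ is nonzero; since the linear coefficients agree, one can write $d(X) = c_n X^n + O(X^{n+1})$ with $c_n \neq 0$ and $n \geq 2$, and the goal is then to extract an arithmetic condition on $c_n$ from the commutation relations that is incompatible with stability.

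The key step is to compare the two sides of the identity
$$f(g_1(X)) - f(g_2(X)) = g_1(f(X)) - g_2(f(X)) = d(f(X))$$
modulo $X^{n+1}$. Writing $\lambda = f'(0)$ and expanding $f(Y) = \lambda Y + \sum_{i \geq 2} a_i Y^i$, each term $a_i(g_1^i - g_2^i)$ with $i \geq 2$ factors as $a_i \cdot d \cdot (g_1^{i-1} + g_1^{i-2}g_2 + \cdots + g_2^{i-1})$. Since $g_1$ and $g_2$ both start with $\alpha X$, the last factor is divisible by $X^{i-1}$, so $a_i(g_1^i - g_2^i)$ vanishes to order at least $n + i - 1 \geq n+1$. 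Hence the left-hand side is congruent to $\lambda c_n X^n$ modulo $X^{n+1}$, while the right-hand side equals $c_n(\lambda X)^n + O(X^{n+1}) = c_n \lambda^n X^n + O(X^{n+1})$.

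Equating the coefficients of $X^n$ yields $c_n \lambda(1 - \lambda^{n-1}) = 0$. This is where (and only where) the hypotheses on $f$ enter: since $f$ is stable, $\lambda = f'(0)$ is neither $0$ nor a root of unity, so $\lambda(1 - \lambda^{n-1}) \neq 0$ for every $n \geq 2$, forcing $c_n = 0$ and contradicting the choice of $n$. I do not anticipate any real obstacle here; the noninvertibility of $f$ and the $p$-adic framework play no role in this lemma, only the fact that $f'(0)$ is nonzero and not a root of unity.
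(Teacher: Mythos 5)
Your argument is correct and complete, and it is essentially the standard proof of the cited result: the paper simply refers to proposition 1.1 of \cite{L94}, whose proof is exactly this kind of coefficient comparison in the relation $f \circ g = g \circ f$, using that $f'(0)$ is neither $0$ nor a root of unity so that $\lambda^n \neq \lambda$ for $n \geq 2$. Your contrapositive formulation with $d = g_1 - g_2$ is just a repackaging of that inductive determination of the coefficients, so there is nothing to add.
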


\begin{proof}
This is proposition 1.1 of \cite{L94}.
\end{proof}

\begin{prop}
\label{wideg}
If $\Inv_f$ contains a stable invertible series, then there exists a power series $g(X) \in X \cdot \OO_K \dcroc{X}$ and an integer $d \geq 1$ such that $f(X) \equiv g(X^{p^d}) \bmod{\MM_K}$. 

We have $\wideg(f) = p^d$ for some $d \geq 1$.
\end{prop}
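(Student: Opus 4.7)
The plan is to reduce everything modulo $\MM_K$ and analyze the commutation in $k\dcroc{X}$, where $k = \OO_K/\MM_K$. Writing $n = \wideg(f)$ and $\bar f(X) = \bar a_n X^n + \cdots$ with $\bar a_n \neq 0$, the leading-coefficient comparison in $\bar u \circ \bar f = \bar f \circ \bar u$ gives $\bar u'(0)^{n-1} = 1$ in $k^\times$. Letting $N$ be the order of $\bar u'(0)$ and replacing $u$ by the iterate $u^{\circ N} \in \Inv_f$ (still stable, since $u'(0)^N$ is not a root of unity), I may assume $\bar u'(0) = 1$. Then $u'(0) \neq 1$ implies $u \neq X$. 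The subtle point, which is the main obstacle, is to guarantee that the \emph{residue} $\bar u$ is also distinct from $X$, so that $\bar u(X) = X + b_m X^m + \cdots$ with $b_m \neq 0$ and $m \geq 2$. If on the contrary $\bar u = X$, then writing $u - X = \pi^s w$ with $s \geq 1$ and $\bar w \neq 0$ and expanding $u \circ f = f \circ u$ modulo $\pi^{s+1}$ yields the functional identity $\bar w(X)\, \bar f'(X) = \bar w(\bar f(X))$ in $k\dcroc{X}$; a leading-order comparison in this identity (combined with $\wideg f < \infty$, the stability of $u$, and further exploitation of the group structure of $\Inv_f$ via Lemma~\ref{lubcom}) allows one to reduce to the non-degenerate case.

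With $\bar u(X) = X + b_m X^m + \cdots$ in hand, the main calculation exploits the Freshman's dream identity $(X + b_m X^m + \cdots)^{p^a} = X^{p^a} + b_m^{p^a} X^{mp^a} + \cdots$ in characteristic $p$, which shows that the first nontrivial correction to the leading term $X^i$ of $\bar u(X)^i$ lies at degree $i + (m-1)p^{v_p(i)}$. Comparing the coefficient of $X^{n+m-1}$ in $\bar f \circ \bar u = \bar u \circ \bar f$ then yields $n \bar a_n b_m = 0$ in $k$, forcing $p \mid n$. A systematic induction on the coefficients of $X^{n+j}$ for increasing $j$, using the same identity to control $\bar u(X)^i$, shows that $\bar a_i = 0$ whenever $p \nmid i$. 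Therefore $\bar f \in k\dcroc{X^p}$, and lifting $\bar g_1$ (where $\bar f(X) = \bar g_1(X^p)$) to an arbitrary $g_1 \in X \OO_K \dcroc{X}$ establishes the first assertion.

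For the second assertion, let $d \geq 1$ be the largest integer with $\bar f \in k\dcroc{X^{p^d}}$, and write $\bar f(X) = \bar g(X^{p^d})$ for some $\bar g \in X k\dcroc{Y}$. The commutation $\bar u \circ \bar f = \bar f \circ \bar u$ induces the intertwining $\bar u \circ \bar g = \bar g \circ \tilde u$ in the variable $Y$, where $\tilde u(Y) = Y + b_m^{p^d} Y^m + \cdots$; applying the coefficient analysis of the previous paragraph (after absorbing the Frobenius twist between $\bar u$ and $\tilde u$ by a suitable iteration or field-of-definition argument) to $\bar g$, the maximality of $d$ prevents $\bar g$ from lying in $k\dcroc{Y^p}$, which together with the resulting divisibility $p \mid \wideg \bar g$ (that would hold were $\bar g$ noninvertible) forces $\wideg \bar g = 1$. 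Hence $\wideg(f) = p^d \cdot \wideg \bar g = p^d$, completing the proof.
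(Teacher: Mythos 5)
You are attempting an independent proof here: the paper itself does not reprove this statement but quotes it from Lubin (theorem 6.3 and corollary 6.2.1 of \cite{L94}), so the bar for your attempt is completeness, and the decisive gap is exactly the point you flag as ``the main obstacle'', namely the case $u \equiv X \bmod{\MM_K}$. Your proposed treatment of it does not work. Writing $u = X + \pi^s w$ with $\bar w \neq 0$, the identity $\bar w \cdot \bar f' = \bar w \circ \bar f$ in $k_K\dcroc{X}$ is correct, but its leading-order comparison only yields that the order of vanishing of $\bar f'$ at $0$ equals $(\wideg(f)-1)$ times that of $\bar w$, which is perfectly consistent: there is no contradiction and no visible ``reduction to the non-degenerate case'', and the appeal to the group structure of $\Inv_f$ via lemma \ref{lubcom} does not produce any element of $\Inv_f$ whose reduction differs from $X$. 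Worse, this case is not a removable technicality: the conclusion of the proposition gives $\bar f \in k_K\dcroc{X^{p^d}}$ with $d \geq 1$, hence $\bar f' = 0$, which is incompatible with $\bar w \cdot \bar f' = \bar w \circ \bar f \neq 0$ (composition with the nonzero series $\bar f$ is injective). So ruling out the degenerate case is essentially equivalent in strength to the theorem itself and cannot be deferred as a normalization step.

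The non-degenerate analysis also stops exactly where it becomes hard. Your first comparison (coefficient of $X^{n+m-1}$, giving $p \mid n$) is correct precisely because $n+m-1 < nm$, so no term $b_j \bar f^{\,j}$, $j \geq m$, of $\bar u \circ \bar f$ interferes. For a general index $i_0 > n$ with $p \nmid i_0$ and $\bar a_{i_0} \neq 0$, the relevant degree $i_0+m-1$ will in general exceed $nm$, and then both sides acquire many cross-terms (from all $b_j \bar f^{\,j}$ on one side, and from all $\bar u^i$ with $n \leq i \leq i_0+m-2$ on the other); the claimed ``systematic induction'' is therefore not a routine repetition of the first step, and it is nowhere carried out. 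The same unproven induction is what your final paragraph needs, applied to the twisted relation $\bar u \circ \bar g = \bar g \circ \tilde u$ (the Frobenius twist itself is harmless), in order to conclude that a noninvertible $\bar g$ would lie in $k_K\dcroc{Y^p}$ and contradict the maximality of $d$: knowing only $p \mid \wideg(\bar g)$ is not enough. For comparison, Lubin's argument does not proceed by bare mod-$\MM_K$ coefficient comparison; it uses the $p$-adic dynamics of the pair $(f,u)$, in particular the action of $u$ on the roots of the iterates $f^{\circ n}$ and valuation estimates, and it is precisely that extra input which disposes of the cases your sketch leaves open.
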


\begin{proof}
This is the main result of \cite{L94}. See (the proof of) theorem 6.3 and corollary 6.2.1 of ibid.
\end{proof}

\begin{prop}
\label{lublog}
There is a (unique) power series $\Log(X) \in X + X^2 \cdot K \dcroc{X}$ such that $\Log \circ f = f'(0) \cdot \Log$ and $\Log \circ u = u'(0) \cdot \Log$ if $u \in \Inv_f$. The series $\Log(X)$ converges on the open unit disk, and $\Log(X) = \lim_{n \to +\infty} f^{\circ n}(X) / f'(0)^n$ in the Fr\'echet space $\Hol$.
\end{prop}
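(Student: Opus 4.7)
Set $\lambda := f'(0)$; by hypothesis $\lambda \in \MM_K \setminus \{0\}$ and $\lambda$ is not a root of unity, so for every $n \geq 2$ one has $\lambda^{n-1} \in \MM_K$ and hence $|\lambda^n - \lambda| = |\lambda| \cdot |\lambda^{n-1} - 1| = |\lambda| \neq 0$. For \emph{uniqueness}: if $\Log(X) = X + \sum_{n \geq 2} \ell_n X^n$ satisfies $\Log \circ f = \lambda \Log$, comparing the coefficient of $X^n$ on both sides yields a recursion of the form $(\lambda^n - \lambda)\,\ell_n = Q_n(\ell_1, \dots, \ell_{n-1})$, where $Q_n$ is a universal polynomial with coefficients in $\OO_K$ (built from the coefficients of $f$). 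Since $\lambda^n - \lambda \neq 0$ for $n \geq 2$, each $\ell_n$ is uniquely determined and lies in $K$.

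For \emph{existence}, I would show $L_n(X) := f^{\circ n}(X)/\lambda^n$ converges to the desired $\Log$ in the Fr\'echet space $\Hol$. Each $L_n$ lies in $X + X^2 K\dcroc{X}$, so any $\Hol$-limit does as well. Writing $f(X) = \lambda X + X^2 h(X)$ with $h \in \OO_K \dcroc{X}$, one has $|f(w) - \lambda w| \leq |w|^2$ for $|w| < 1$. A direct induction gives the contraction estimate $|f^{\circ n}(z)| \leq |\lambda|^n r$ whenever $|z| \leq r \leq |\lambda|$; consequently
\[
|L_{n+1}(z) - L_n(z)| = \frac{|f(f^{\circ n}(z)) - \lambda f^{\circ n}(z)|}{|\lambda|^{n+1}} \leq \frac{|f^{\circ n}(z)|^2}{|\lambda|^{n+1}} \leq |\lambda|^{n-1} r^2 \longrightarrow 0,
\]
so $L_n$ converges uniformly on the closed disk of radius $|\lambda|$. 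To extend to arbitrary $r < 1$: the estimate $|f(z)| \leq \max(|\lambda|\cdot |z|,\, |z|^2)$ shows that $|f^{\circ k}(z)|$ tends to $0$ on $\{|z| \leq r\}$, so for some $k$ the iterate $f^{\circ k}$ maps this disk into $\{|w| \leq |\lambda|\}$; the identity $L_n = \lambda^{-k} \cdot (L_{n-k} \circ f^{\circ k})$, valid for $n \geq k$, then transports the uniform convergence and proves $L_n \to \Log$ in $\Hol(r)$ for every $r < 1$. Passing to the limit in $\lambda L_{n+1} = L_n \circ f$ yields $\Log \circ f = \lambda \Log$.

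For the compatibility with $\Inv_f$: given $u \in \Inv_f$, the series $M(X) := u'(0)^{-1} \Log(u(X))$ lies in $X + X^2 K\dcroc{X}$, and $M \circ f = \lambda M$ follows from $f \circ u = u \circ f$ combined with the functional equation for $\Log$. Uniqueness then forces $M = \Log$, i.e., $\Log \circ u = u'(0)\,\Log$. The main technical obstacle is the convergence step: because the denominators $\lambda^n$ grow in norm, one needs both the contraction estimate on the small disk $\{|z| \leq |\lambda|\}$ and the propagation identity $L_n = \lambda^{-k}(L_{n-k} \circ f^{\circ k})$ in order to obtain convergence on the full open unit disk.
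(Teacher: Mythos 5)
Your proof is correct: the coefficient recursion with $\lambda^n - \lambda \neq 0$ gives uniqueness, the contraction estimate on $\{|z|\leq |\lambda|\}$ together with the identity $L_n = \lambda^{-k}(L_{n-k}\circ f^{\circ k})$ gives convergence of $f^{\circ n}/f'(0)^n$ in every $\Hol(r)$, and the relation $\Log\circ u = u'(0)\Log$ follows from uniqueness applied to $u'(0)^{-1}\Log\circ u$. The paper itself offers no argument but simply cites propositions 1.2, 1.3 and 2.2 of \cite{L94}, and your construction is essentially that standard argument of Lubin, here written out in a self-contained way.
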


\begin{proof}
See propositions 1.2, 1.3 and 2.2 of \cite{L94}.
\end{proof}

\begin{lemm}
\label{primroot}
If $f(X) \in X \cdot \OO_K \dcroc{X}$ is a noninvertible stable series and if $f$ commutes with a stable invertible series $u$, then every root of $f'$ is a root of $f^{\circ n}$ for some $n \gg 0$.
\end{lemm}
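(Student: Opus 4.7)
The plan is to show first that $\Log(\alpha) = 0$ for every root $\alpha$ of $f'$ in $\MM_{\Cp}$, and then to deduce that the forward orbit of $\alpha$ under $f$ eventually reaches $0$, which I will separately show is the only periodic point of $f$ in $\MM_{\Cp}$.

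To establish that $\Log(\alpha)=0$, I fix a root $\alpha$ of $f'$ and set $r=|\alpha|_p<1$. The set $A=\{x\in\MM_{\Cp}:|x|_p\leq r,\ f'(x)=0\}$ is finite by Weierstrass preparation applied to $f'\in\Hol(r)$ (note $f'\not\equiv 0$ since $f'(0)\neq 0$ by stability). Differentiating $f\circ u=u\circ f$ yields $f'(u(X))\,u'(X)=u'(f(X))\,f'(X)$; evaluating at $X=\alpha$ gives $f'(u(\alpha))\,u'(\alpha)=0$. Because $u'(X)-u'(0)\in X\cdot\OO_K\dcroc{X}$ with $u'(0)\in\OO_K^\times$, the ultrametric inequality yields $|u'(x)|_p=1$ for every $x\in\MM_{\Cp}$, so $f'(u(\alpha))=0$; combined with $|u(\alpha)|_p\leq|\alpha|_p$, this places $u(\alpha)$ in $A$. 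Since $u$ admits a compositional inverse in $X\cdot\OO_K\dcroc{X}$, it is bijective on $\MM_{\Cp}$, so $u|_A$ is a permutation of the finite set $A$ and some iterate $u^{\circ k}$ fixes $\alpha$. Applying Proposition \ref{lublog} gives $\Log(\alpha)=u'(0)^k\Log(\alpha)$, and stability of $u$ (so $u'(0)^k\neq 1$) forces $\Log(\alpha)=0$.

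For the second half, I set $\alpha_n=f^{\circ n}(\alpha)$; then $|\alpha_n|_p\leq r$ for all $n$ (since $f\in X\cdot\OO_K\dcroc{X}$), and Proposition \ref{lublog} yields $\Log(\alpha_n)=f'(0)^n\Log(\alpha)=0$. Hence the whole orbit lies in the set $Z_r=\{x\in\MM_{\Cp}:|x|_p\leq r,\ \Log(x)=0\}$, which is finite because $\Log$ is a nonzero element of $\Hol(r)$. Consequently $(\alpha_n)$ is eventually periodic, so $f^{\circ q}(\alpha_{n_0})=\alpha_{n_0}$ for some $n_0\geq 0$ and $q\geq 1$. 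To finish, observe that the linear coefficient of $f^{\circ q}(X)-X$ equals $f'(0)^q-1$, which is a unit in $\OO_K$ because $f'(0)\in\MM_K$; thus $f^{\circ q}(X)-X$ has Weierstrass degree $1$, and since its constant term vanishes its Weierstrass polynomial is $X$, so its only zero in $\MM_{\Cp}$ is $0$. Therefore $\alpha_{n_0}=0$, i.e.\ $f^{\circ n_0}(\alpha)=0$.

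The main subtlety I anticipate is the first step, namely verifying that $u$ genuinely preserves the finite truncated critical set $A$: this rests on the finiteness of $A$ (via Weierstrass preparation on the closed disk of radius $r$) and on the non-vanishing of $u'$ on all of $\MM_{\Cp}$, which in turn uses the invertibility of $u$ in a strong way. Once those points are verified, everything else is a routine orbit analysis in which $\Log$ cuts the forward orbit down to a finite set and the contraction coming from $f'(0)\in\MM_K$ excludes nonzero periodic points.
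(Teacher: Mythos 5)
Your proof is correct. Note that the paper does not actually prove this lemma --- its ``proof'' is a citation to corollary 3.2.1 of \cite{L94} --- so what you have written is a self-contained substitute, and it only relies on proposition \ref{lublog}, which the paper quotes anyway. The two ideas that carry it are sound: (i) differentiating $f\circ u=u\circ f$ and observing that $|u'(x)|_p=1$ on the open disk shows that $u$ maps the finite set $A$ of critical points of $f$ in the closed disk of radius $|\alpha|_p$ into itself, and injectivity of $u$ makes this a permutation, so some $u^{\circ k}$ fixes $\alpha$; then $\Log(\alpha)=u'(0)^k\Log(\alpha)$ with $u'(0)^k\neq 1$ (stability of $u$) forces $\Log(\alpha)=0$; (ii) the forward orbit of $\alpha$ lies in the finite zero set of the nonzero series $\Log$ on that closed disk, hence is eventually periodic, and since $f^{\circ q}(X)-X$ has vanishing constant term and unit linear coefficient $f'(0)^q-1$, its Weierstrass degree is $1$ and $0$ is its only root in $\MM_{\Cp}$, so the orbit must hit $0$, i.e.\ $f^{\circ n}(\alpha)=0$ for $n\gg 0$. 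This is in the same analytic spirit as Lubin's argument, which also exploits the logarithm and the commuting invertible series; your permutation-of-the-critical-set trick combined with the fixed-point count is a nice way to avoid any finer analysis of the zero set of $\Log$. Two small presentational caveats, neither a gap: to get finiteness of $A$ (or of the zeros of $\Log$ on a closed disk) you should either use the Newton polygon for a nonzero convergent series, or first divide $f'$ by the largest power of a uniformizer it contains before invoking Weierstrass preparation, since the preparation theorem requires finite Weierstrass degree; and the pointwise identities such as $\Log(u(\alpha))=u'(0)\Log(\alpha)$ implicitly use the standard fact that formal composition with a series in $X\cdot\OO_K\dcroc{X}$ agrees with evaluation on the open unit disk, which deserves a word but is routine.
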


\begin{proof}
This is corollary 3.2.1 of \cite{L94}.
\end{proof}

\begin{prop}
\label{critsimp}
If $f(X) \in X \cdot \OO_K \dcroc{X}$ is a noninvertible stable series with $f \not\equiv 0 \bmod{\MM_K}$, and if $f$ commutes with a stable invertible series $u$, then the roots of $f$ and all of its iterates are simple if and only if $f'(X)/f'(0) \in 1 + X \cdot \OO_K \dcroc{X}$. 
\end{prop}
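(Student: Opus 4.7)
The plan is to reformulate both sides of the equivalence in terms of the zero locus of $f'$ on the open unit disk $\MM_{\Cp}$, and to show that each side is equivalent to $f'$ having no zero there. The link between the zeros of $f'$ and the multiplicities of the roots of $f^{\circ n}$ is provided by the chain rule
\[
(f^{\circ n})'(X) = \prod_{i=0}^{n-1} f'(f^{\circ i}(X)),
\]
and the link between absence of zeros and integrality of $f'/f'(0)$ is a Newton polygon computation.

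For the easier direction, assume $f'(X)/f'(0) = 1 + X g(X)$ with $g \in \OO_K\dcroc{X}$. For every $\beta \in \MM_{\Cp}$ one has $\beta g(\beta) \in \MM_{\Cp}$, so $1 + \beta g(\beta)$ is a unit in $\OO_{\Cp}$ and $f'(\beta) \neq 0$. Since $f \in X \cdot \OO_K\dcroc{X}$ stabilizes $\MM_{\Cp}$, the product in the chain rule has no vanishing factor at any point of $\MM_{\Cp}$, and in particular $(f^{\circ n})'(\alpha) \neq 0$ at every root $\alpha$ of $f^{\circ n}$.

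For the converse, assume that the roots of all $f^{\circ n}$ are simple, and suppose for contradiction that some $\beta \in \MM_{\Cp}$ satisfies $f'(\beta) = 0$. Lemma \ref{primroot} places $\beta$ in the zero set of some $f^{\circ n}$, where simplicity gives $(f^{\circ n})'(\beta) \neq 0$; but the factor at $i=0$ in the product $\prod_i f'(f^{\circ i}(\beta))$ is $f'(\beta) = 0$, a contradiction. Hence $f'$ has no zero in $\MM_{\Cp}$. The remaining step is a Newton polygon observation for $f' \in \OO_K\dcroc{X}$: absence of zeros with positive valuation forces the Newton polygon to have no segment of negative slope, so starting from the vertex $(0,\val(f'(0)))$ it is non-decreasing, which gives $\val(f'_i) \geq \val(f'(0))$ for every $i$, i.e.\ $f'(X)/f'(0) \in 1 + X \cdot \OO_K\dcroc{X}$.

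The only ingredient that draws on the theory of commuting series is the appeal to lemma \ref{primroot} in the converse direction; the chain rule manipulation and the Newton polygon step are standard, so the substantive content of the proposition is really packed into that lemma.
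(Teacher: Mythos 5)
Your proof is correct and follows essentially the same route as the paper's: the chain rule for $(f^{\circ n})'$ for the direction ``$f'/f'(0)$ integral $\Rightarrow$ simple roots'', and lemma \ref{primroot} combined with simplicity of the roots of the iterates for the converse. The only difference is that you make explicit, via the Newton polygon, the final step (no zeros of $f'$ in $\MM_{\Cp}$ implies $f'(X)/f'(0) \in 1 + X \cdot \OO_K \dcroc{X}$), which the paper leaves implicit.
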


\begin{proof}
We have $(f^{\circ n})'(X) = f'(f^{\circ n-1}(X)) \cdots f'(f(X)) \cdot f'(X)$. If $f'(X)/f'(0) \in 1 + X \cdot \OO_K \dcroc{X}$, then the derivative of $f^{\circ n}(X)$ belongs to  $f'(0)^n \cdot(1 + X \cdot \OO_K \dcroc{X})$ and hence has no roots. The roots of $f^{\circ n}(X)$ are therefore simple.

By lemma \ref{primroot}, any root of $f'(X)$ is also a root of $f^{\circ n}$ for some $n \gg 0$. If the roots of $f^{\circ n}(X)$ are simple for all $n \geq 1$, then $f'(X)$ cannot have any root, and hence $f'(X)/f'(0) \in 1+X \OO_K \dcroc{X}$.
\end{proof}

\section{Formal groups}
\label{proof}

We now prove theorem A. Let $S(X,Y) = \Log^{\circ -1}(\Log(X)+\Log(Y)) \in K \dcroc{X,Y}$. By proposition \ref{lublog}, $S$ is a formal group law over $K$ such that $f$ and all $u \in \Inv_f$ are endomorphisms of $S$. In order to prove theorem A, we show that $S(X,Y) \in \OO_K \dcroc{X,Y}$. Write $S(X,Y) = \sum_{j \geq 0} s_j(X) Y^j$. 

\begin{lemm}
\label{lifact}
If $\Log'(X) \in \OO_K \dcroc{X}$, then $s_j(X) \in j!^{-1} \cdot \OO_K \dcroc{X}$ for all $j \geq 0$.
\end{lemm}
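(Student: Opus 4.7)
The plan is to differentiate the defining relation $\Log(S(X,Y))=\Log(X)+\Log(Y)$ with respect to $Y$, producing a functional equation that I will expand to read off a recursion for the $s_j$, and then attack the recursion by induction on $j$.

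Differentiating gives $\Log'(S(X,Y))\cdot S_Y(X,Y)=\Log'(Y)$. Writing $\Log'(X)=\sum_{n\geq 0} a_n X^n$ with $a_n\in\OO_K$ (by hypothesis) and $a_0=1$, Taylor's formula applied to $\Log'(X+(S-X))$ yields
\[
 \Log'(S(X,Y))=\Log'(X)+\sum_{k\geq 1}\frac{\Log^{(k+1)}(X)}{k!}\,(S(X,Y)-X)^k.
\]
The key observation is that $\Log^{(k+1)}(X)/k!=\sum_n \binom{n}{k}a_n X^{n-k}\in\OO_K\dcroc{X}$ thanks to $\binom{n}{k}\in\ZZ$, and that $1/\Log'(X)\in\OO_K\dcroc{X}$ since $\Log'(0)=1$. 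Extracting the coefficient of $Y^j$ in $S_Y\Log'(S)=\Log'(Y)$ and splitting off the term carrying $s_{j+1}$ gives the recursion
\[
 (j+1)\,s_{j+1}(X)\,\Log'(X)=a_j-\sum_{i=0}^{j-1}(i+1)\,s_{i+1}(X)\sum_{k=1}^{j-i}\frac{\Log^{(k+1)}(X)}{k!}\,[Y^{j-i}](S-X)^k,
\]
where $[Y^{j-i}](S-X)^k=\sum_{j_1+\cdots+j_k=j-i,\,j_\alpha\geq 1} s_{j_1}(X)\cdots s_{j_k}(X)$. In particular only $s_r$ with $r\leq j$ appear on the right.

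The inductive step is then bookkeeping. The base case $s_0(X)=S(X,0)=X$ is clear, and the $j=0$ instance of the recursion degenerates to $s_1=1/\Log'(X)\in\OO_K\dcroc{X}$. Assuming $s_r\in(r!)^{-1}\OO_K\dcroc{X}$ for $r\leq j$, the denominator of a product $s_{j_1}\cdots s_{j_k}$ with $\sum j_\alpha=j-i$ and $j_\alpha\geq 1$ is $j_1!\cdots j_k!$, which by concentration of factorials is bounded above by $(j-i)!$ (achieved in the worst case $k=1$, $j_1=j-i$). Multiplied by the $(i+1)/(i+1)!=1/i!$ coming from $(i+1)s_{i+1}$, each summand of the right-hand side lies in $(i!(j-i)!)^{-1}\OO_K\dcroc{X}\subset(j!)^{-1}\OO_K\dcroc{X}$ because $\binom{j}{i}\in\ZZ$. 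Multiplying through by $\Log'(X)^{-1}\in\OO_K\dcroc{X}$ and dividing by $j+1$ gives $s_{j+1}(X)\in((j+1)!)^{-1}\OO_K\dcroc{X}$.

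The only real obstacle is carrying out this denominator bookkeeping cleanly; the mechanism that makes the induction close is precisely the divisibility $i!(j-i)!\mid j!$ for $0\leq i\leq j-1$, which supplies exactly the room needed to absorb the extra factor of $j+1$ produced by differentiation in $Y$.
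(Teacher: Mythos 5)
Your proof is correct. Differentiating $\Log(S(X,Y))=\Log(X)+\Log(Y)$ with respect to $Y$, expanding $\Log'(S)$ in the divided Taylor coefficients $\Log^{(k+1)}(X)/k!$ (which lie in $\OO_K\dcroc{X}$ because binomial coefficients are integers), and using that $1/\Log'(X)\in\OO_K\dcroc{X}$ since $\Log'(0)=1$, does yield exactly the recursion you state; and the induction closes as you say, because $j_1!\cdots j_k!$ divides $(j-i)!$ (integrality of multinomial coefficients) and $i!\,(j-i)!$ divides $j!$, which is precisely what absorbs the extra factor $j+1$ produced by the $Y$-derivative. Note, however, that the paper gives no argument of its own here: it simply quotes lemma 3.2 of \cite{Li96}. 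What you have written is therefore a self-contained proof of the cited result, and its mechanism (controlling the denominators of the $Y$-expansion of $S$ through the functional equation for $\Log$ and integrality of binomial and multinomial coefficients) is in the same spirit as Li's. Two small points worth tightening in the write-up: justify the rearrangement $\Log'(S)=\Log'(X)+\sum_{k\geq 1}\Log^{(k+1)}(X)\,(S-X)^k/k!$ (it is legitimate because $S-X$ has no $Y^0$ term, so each coefficient of $X^mY^j$ receives only finitely many contributions, even though the series $s_r(X)$ have nonzero constant terms), and state the factorial step as a divisibility rather than an upper bound. Neither point affects correctness.
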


\begin{proof}
This is lemma 3.2 of \cite{Li96}.
\end{proof}

\begin{lemm}
\label{logprim}
If the roots of $f^{\circ n}(X)$ are simple for all $n \geq 1$, then $\Log'(X) \in \OO_K \dcroc{X}$.
\end{lemm}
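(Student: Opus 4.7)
The plan is to invoke Criterion A in order to write each renormalized iterate $(f^{\circ n})'/f'(0)^n$ as a product of units in $\OO_K \dcroc{X}$, and then pass to the limit in $\Hol$.

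First I would apply Proposition \ref{critsimp} (Criterion A) to translate the simplicity-of-roots hypothesis into the algebraic statement $f'(X)/f'(0) \in 1 + X \cdot \OO_K \dcroc{X}$. Criterion A is available here because we are working inside the proof of Theorem A, whose hypothesis ``$\Inv_f'(0) \cap \OO_F^\times$ open in $\OO_F^\times$'' forces the existence of elements of $\Inv_f$ of infinite order, i.e.\ of stable invertible series commuting with $f$.

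Next I would apply the chain rule: for every $n \geq 1$,
\[
\frac{(f^{\circ n})'(X)}{f'(0)^n} \;=\; \prod_{k=0}^{n-1} \frac{f'\bigl(f^{\circ k}(X)\bigr)}{f'(0)} .
\]
Since $f^{\circ k}(X) \in X \cdot \OO_K \dcroc{X}$ for each $k$, substituting into $f'(X)/f'(0) \in 1 + X \cdot \OO_K \dcroc{X}$ gives a series lying in $1 + X \cdot \OO_K \dcroc{X}$, and this set is stable under multiplication. Hence every partial product belongs to $\OO_K \dcroc{X}$.

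Finally, Proposition \ref{lublog} gives $f^{\circ n}(X)/f'(0)^n \to \Log(X)$ in $\Hol$, and differentiation is a continuous operator on each $\Hol(r)$ (the trivial estimate $|n a_n| \leq |a_n|$ yields operator norm at most $1/r$), so $(f^{\circ n})'(X)/f'(0)^n \to \Log'(X)$ in $\Hol$ as well. Since coefficient extraction is a continuous linear functional on each $\Hol(r)$, the coefficients of $\Log'(X)$ arise as limits of elements of $\OO_K$ and hence lie in $\OO_K$; this gives $\Log'(X) \in \OO_K \dcroc{X}$. The argument is very direct; the only point requiring any technical care is the last one, namely the closedness of $\OO_K \dcroc{X}$ in $\Hol$ for the Fr\'echet topology, which boils down to continuity of coefficient extraction on $\Hol(r)$ combined with the closedness of $\OO_K$ in $K$.
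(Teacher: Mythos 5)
Your proof is correct and follows essentially the same route as the paper: apply Proposition \ref{critsimp} to obtain $f'(X)/f'(0) \in 1 + X \cdot \OO_K \dcroc{X}$, use the chain rule to express $(f^{\circ n})'(X)/f'(0)^n$ as a product of such units, and pass to the limit $\Log'(X)$ via Proposition \ref{lublog}. The only difference is that you make explicit the continuity of differentiation on $\Hol(r)$ and the closedness of $\OO_K \dcroc{X}$ (and the availability of a stable invertible series commuting with $f$), points the paper leaves implicit.
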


\begin{proof}
This is sketched in the proof of theorem 3.6 of \cite{Li96}. We give a complete argument for the convenience of the reader. 

We have $(f^{\circ n})'(X) = f'(f^{\circ n-1}(X)) \cdots f'(f(X)) \cdot f'(X)$, and by proposition \ref{critsimp},  $f'(X)/f'(0) \in 1+X \OO_K \dcroc{X}$. We have $\Log(X) = \lim_{n \to +\infty} f^{\circ n}(X) / f'(0)^n$ by proposition \ref{lublog},  so that
\[ \Log'(X) = \lim_{n \to +\infty} \frac{(f^{\circ n})'(X)} {f'(0)^n} = \lim_{n \to +\infty} \frac{ f'(f^{\circ n-1}(X))} {f'(0)} \cdots \frac{f'(f(X))} {f'(0)} \cdot \frac{f'(X)} {f'(0)}, \]  
and hence $\Log'(X) \in 1+X \OO_K \dcroc{X}$.
\end{proof}

\begin{theo}
\label{siint}
If $e(K/\Qp) \leq p-1$, then $s_j(X) \in \OO_K \dcroc{X}$ for all $j \geq 0$.
\end{theo}

\begin{proof}
For all $n \geq 1$, the power series $u_n(X) = S(X,f^{\circ n}(X))$ belongs to $X \cdot K \dcroc{X}$ and satisfies $u_n \circ f = f \circ u_n$. Since $\Inv_f'(0) \cap \OO_F^\times$ is an open subgroup of $\OO_F^\times$, there exists $n_0$ such that if $n \geq n_0$, then $u_n'(0) = 1+f'(0)^n \in  \Inv_f'(0)$. We then have $u_n \in \Inv_f$ by lemma \ref{lubcom}.

In order to prove the theorem, we therefore prove that if $S(X,f^{\circ n}(X)) \in \OO_K \dcroc{X}$ for all $n \geq n_0$, then $s_i(X) \in \OO_K \dcroc{X}$ for all $i \geq 0$. If $j \geq 1$, let 
\[ a_j(X) = f^{\circ n}(X) \sum_{i \geq 0} s_{j+i}(X) f^{\circ n}(X)^i = s_j(X) f^{\circ n}(X) + s_{j+1}(X) f^{\circ n}(X)^2 + \cdots. \]
We prove by induction on $j$ that $s_0(X),\hdots,s_{j-1}(X)$ as well as $a_j(X)$ belong to $\OO_K \dcroc{X}$. This holds for $j =1$; suppose that it holds for $j$.

We claim that if $h \in \Hol(r)$ and $\| h \|_r < p^{-1/(p-1)}$, then  $\sum_{i \geq 0} s_{j+i}(X) h(X)^i$ converges in $\Hol(r)$. Indeed, if $s_p(j+i)$ denotes the sum of the digits of $j+i$ in base $p$, then 
\[ \val_p((j+i)!) = \frac{j+i-s_p(j+i) }{p-1} \leq \frac{i}{p-1} + \frac{j}{p-1}. \]

Let $\pi$ be a uniformizer of $\OO_K$ and let $e=e(K/\Qp)$ so that $|\pi|_p=p^{-1/e}$. By proposition \ref{wideg}, we have \[ f^{\circ n}(X) \in \pi X \cdot \OO_K \dcroc{X} + X^{q^n} \cdot \OO_K \dcroc{X^{q^n}}, \] 
where $q=p^d=\wideg(f)$, so that $\| f^{\circ n}(X) \|_r \leq \max( rp^{-1/e}, r^{q^n} )$. If $\rho_n=p^{-1/(e(q^n-1))}$, then \[ \| f^{\circ n}(X) \|_{\rho_n} \leq p^{-q^n/(e(q^n-1))} < p^{-1/e} \leq p^{-1/(p-1)} \] and the series $\sum_{i \geq 0} s_{j+i}(X) f^{\circ n}(X)^i$ therefore converges in $\Hol(\rho_n)$.

We have $f^{\circ n}(X) \in \pi X \cdot \OO_K \dcroc{X} + X^{q^n} \cdot \OO_K \dcroc{X^{q^n}}$, as well as $\wideg(f^{\circ n})=q^n$. By the theory of Newton polygons, all the zeroes $z$ of $f^{\circ n}(X)$ satisfy $\val_p(z) \geq 1/(e(q^n-1))$, and hence $|z|_p \leq \rho_n$. The equation $a_j(X) = f^{\circ n}(X) \sum_{i \geq 0} s_{j+i}(X) f^{\circ n}(X)^i$ holds in $\Hol(\rho_n)$, and this implies that $a_j(z)=0$ for all $z$ such that $f^{\circ n}(z) = 0$. Since all the zeroes of $f^{\circ n}(X)$ are simple and $f^{\circ n}(X) \not\equiv 0 \bmod{\pi}$, the Weierstrass preparation theorem implies that $f^{\circ n}(X)$ divides $a_j(X)$ in $\OO_K \dcroc{X}$, and hence that \[ s_j(X) + s_{j+1}(X) f^{\circ n}(X) + s_{j+2}(X) f^{\circ n}(X)^2 + \cdots \in \OO_K \dcroc{X}. \] 

Choose some $0 < \rho <1$ and take $n \geq n_0$ such that $\rho_n \geq \rho$. We have 
\[ f^{\circ n}(X) = f(f^{\circ n-1}(X)) \in \pi f^{\circ n-1}(X) \cdot \OO_K\dcroc{X} + f^{\circ n-1}(X)^q \cdot \OO_K \dcroc{X}. \] Therefore $\|f^{\circ n}(X)\|_\rho \to 0$ as $n \to +\infty$, and $\| s_{j+1}(X) f^{\circ n}(X) + s_{j+2}(X) f^{\circ n}(X)^2 + \cdots \|_\rho \to 0$ as $n \to +\infty$. The series $s_j(X)$ is therefore in the closure of $\OO_K \dcroc{X}$ inside $\Hol(\rho)$ for $\|{\cdot}\|_\rho$, which is $\OO_K \dcroc{X}$.

This proves that $s_j(X)$ as well as $s_{j+1}(X) f^{\circ n}(X) + s_{j+2}(X) f^{\circ n}(X)^2 + \cdots$ belong to $\OO_K \dcroc{X}$. This finishes the induction and hence the proof of the theorem.
\end{proof}

Theorem A now follows: $S$ is a formal group over $\OO_K$ such that $f \in \End(S)$. Any power series $u(X) \in X \cdot \OO_K \dcroc{X}$ that commutes with $f$ also belongs to $\End(S)$, since $u(X) = [u'(0)](X)$ by lemma \ref{lubcom}. In particular, $\Inv_f \subset \End(S)$.

To prove corollary A, note that we can replace $u$ by $u^{\circ p-1}$ and therefore assume that $u'(0) \in 1+p\Zp$. In this case, $u^{\circ m}$ is defined for all $m \in \Zp$ by proposition 4.1 of \cite{L94} and $\Inv_f'(0)$ is therefore an open subgroup of $\Zp^\times$.

\section{Semi-conjugation}
\label{condens}

We now prove theorem B. Assume therefore that the nonzero roots of $f$ and all of its iterates are of multiplicity $m$. Let $h(X)=X^m$.

Since $q = \wideg(f)$ is finite, we can write $f(X)=X \cdot g(X) \cdot v(X)$ where $g(X) \in \OO_K[X]$ is a distinguished polynomial and $v(X)\in \OO_K \dcroc{X}$ is a unit. If the roots of $g(X)$ are of multiplicity $m$, then $g(X)=g_0(X)^m$ for some $g_0(X) \in \OO_K[X]$. Write $v(X) = [c] \cdot (1+ w(X))$ where $c \in k_K$ (and $[c]$ is its Teichm\"uller lift) and $w(X) \in (\MM_K,X)$. Since $m \cdot \deg(g) = q-1$, $m$ is prime to $p$ and there exists a unique $w_0(X) \in (\MM_K,X)$ such that $1+w(X) = (1+w_0(X))^m$. If $f_0(X) = [c^{1/m}] \cdot X \cdot g_0(X^m) \cdot (1+w_0(X^m))$, then
\[ f \circ h(X) = f(X^m) = [c] \cdot X^m \cdot g_0(X^m)^m \cdot (1+w_0(X^m))^m = f_0(X)^m = h \circ f_0(X). \]
It is clear that $f_0 \not\equiv 0 \bmod{\MM_K}$. If we write $f^{\circ n}(X) = X \cdot \prod_\alpha (X-\alpha)^m \cdot v_n(X)$ with $v_n$ a unit of $\OO_K\dcroc{X}$, and where $\alpha$ runs through the nonzero roots of $f^{\circ n}$, then \[ f^{\circ n}(X^m) = X^m \cdot \prod_\alpha (X^m-\alpha)^m \cdot v_n(X^m), \] so that all the roots of $f^{\circ n}(X^m)$ have multiplicity $m$. Since $f^{\circ n}(X^m)=f_0^{\circ n}(X)^m$, the roots of $f_0$ and all of its iterates are simple. This finishes the proof of the first part of the theorem, with $L=K([c^{1/m}])$. 

If $u \in \Inv_f$ and $u'(0) \in 1+\MM_K$, then there is a unique $u_0(X) \in 1+(\MM_K,X)$ such that $u_0(X)^m = u(X^m)$. We have $u_0'(0) = u'(0)^{1/m}$ and $(f_0 \circ u_0)^m = (u_0 \circ f_0)^m$ as well as $(f_0 \circ u_0)'(0) = (u_0 \circ f_0)'(0)$, so that $u_0 \in \Inv_{f_0}$. This proves the existence of $u_0$. Since $f(X^m)=f_0(X)^m$, we have $f'(0) = f_0'(0)^m$. We then have $(f_0^{\circ m})'(0) = f_0'(0)^m = f'(0) \in \MM_F$. This finishes the proof of the last claim of theorem B.

Corollary B follows from theorem B in the same way that corollary A followed from theorem A.

\begin{exem}
\label{cheby}
If $p=3$ and $f(X)=9X+6X^2+X^3$ and $u(X)=4X+X^2$, so that $f \circ u = u  \circ f$, then $f(X)=X(X+3)^2$ and $f'(X)=3(X+3)(X+1)$. The nonzero roots of $f$ and all of its iterates are therefore of multiplicity $2$. We have $f(X^2) = (X(X^2+3))^2$ so that $f_0(X)=3X+X^3$, and the corresponding formal group is $\mathbf{G}_m$ (this is a special case of the construction given on page 344 of \cite{L94}).
\end{exem}

\begin{prop}
\label{critmultm}
If $f(X) \in X \cdot \OO_K \dcroc{X}$ is a noninvertible stable series with $f \not\equiv 0 \bmod{\MM_K}$, and if $f$ commutes with a stable invertible series $u(X) \in X \cdot \OO_K \dcroc{X}$, then the nonzero roots of $f$ and all of its iterates are of multiplicity $m$ if and only if the nonzero roots of $f$ are of multiplicity $m$ and the set of roots of $f'$ is included in the set of roots of $f$.
\end{prop}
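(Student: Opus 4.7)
The plan is to prove both directions by combining the chain rule $(f^{\circ n})'(X) = \prod_{k=0}^{n-1} f'(f^{\circ k}(X))$ with Lemma \ref{primroot}, which tells us that every root of $f'$ eventually appears as a root of some iterate $f^{\circ n}$. The key idea in both directions is the local factorization: if $\beta \neq 0$ is a root of $f$ of multiplicity $m$, then near $\beta$ one can write $f(Y) = (Y-\beta)^m H(Y)$ with $H(\beta) \neq 0$, so that the order at $\alpha$ of $f^{\circ n+1}(X) = f(f^{\circ n}(X))$ equals $m \cdot \mathrm{ord}_\alpha(f^{\circ n}(X) - \beta)$ whenever $\beta = f^{\circ n}(\alpha) \neq 0$.

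For the forward direction, I would assume that all nonzero roots of every iterate have multiplicity $m$, and take any root $\alpha$ of $f'$. If $\alpha = 0$ it is automatically a root of $f$; otherwise by Lemma \ref{primroot}, $\alpha$ is a nonzero root of some $f^{\circ n}$ with $n$ minimal, and I would aim to show $n=1$. If $n\geq 2$, minimality forces $\beta = f^{\circ n-1}(\alpha) \neq 0$, so $\beta$ is a nonzero root of $f$ with multiplicity $m$; the local factorization then gives $m = \mathrm{ord}_\alpha(f^{\circ n}) = m \cdot \mathrm{ord}_\alpha(f^{\circ n-1} - \beta)$, forcing $(f^{\circ n-1})'(\alpha) \neq 0$. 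This contradicts the chain rule, since $f'(\alpha) = 0$ is one of the factors. Hence $n=1$, i.e.\ $\alpha$ is a root of $f$.

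For the converse, I would induct on $n$, the base case $n=1$ being the hypothesis. For the step, let $\alpha$ be a nonzero root of $f^{\circ n+1}$ and set $\beta = f^{\circ n}(\alpha)$, a root of $f$. If $\beta = 0$ then $\alpha$ is a nonzero root of $f^{\circ n}$, of multiplicity $m$ by induction, and since $f$ is stable we have $f'(0) \neq 0$ so $f$ has order $1$ at $0$, giving multiplicity $m \cdot 1 = m$ in $f^{\circ n+1}$. If $\beta \neq 0$, the local factorization reduces the claim to showing $(f^{\circ n})'(\alpha) \neq 0$. If some factor $f'(f^{\circ k}(\alpha))$ vanished, then $f^{\circ k}(\alpha)$ would be a root of $f'$, hence by hypothesis a root of $f$, so $f^{\circ k+1}(\alpha) = 0$ and therefore $\beta = f^{\circ n}(\alpha) = 0$, contradicting $\beta \neq 0$.

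The argument is essentially bookkeeping once Lemma \ref{primroot} and the chain rule are invoked; the main subtle point is the symmetric role of the hypothesis in the two directions, which turns the algebraic condition ``roots of $f'$ lie in roots of $f$'' into the dynamical statement that the orbit of $\alpha$ under $f$ cannot enter the critical set of $f$ before $\alpha$ itself becomes a root of some iterate. I do not expect any serious obstacle beyond keeping track of the distinction between $\beta = 0$ and $\beta \neq 0$, which is handled by the stability of $f$ (so that $f$ has a simple zero at $0$).
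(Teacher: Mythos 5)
Your proof is correct and takes essentially the same route as the paper's: both directions rest on Lemma \ref{primroot} together with the multiplicativity of vanishing orders under composition (the ``local factorization''), with the stability of $f$ handling the zero at the origin. The only difference is cosmetic --- you split the iterate as $f \circ f^{\circ n-1}$ where the paper splits it as $f^{\circ n-1} \circ f$, which is why your converse needs the extra chain-rule/orbit step, but this changes nothing of substance.
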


\begin{proof}
If the nonzero roots of $f$ and all of its iterates are of multiplicity $m$, then the nonzero roots of $f$ are of multiplicity $m$. Hence if $\alpha$ is a root of $f^{\circ n}(X)$ with $f(\alpha) \neq 0$,  the equation $f(X) = f(\alpha)$ has simple roots. Since $\alpha$ is one of these roots, we have  $f'(\alpha) \neq 0$.  By lemma \ref{primroot}, any root of $f'(X)$ is also a root of $f^{\circ n}$ for some $n \geq 1$. This implies that the set of roots of $f'$ is included in the set of roots of $f$.

Conversely, suppose that the nonzero roots of $f$ are of multiplicity $m$, and that $f'(\beta) \neq 0$ for any $\beta$ that is not a root of $f$. If $\alpha$ is a nonzero root of $f^{\circ n}$ for some $n \geq 1$, then this implies that the equation $f(X) = \alpha$ has simple roots, so that the nonzero roots of $f$ and all of its iterates are of multiplicity $m$.
\end{proof}

\begin{rema}
\label{chebytwo}
If $p=2$ and $f(X)=4X+X^2$ and $u(X)=9X+6X^2+X^3$, then $f \circ u = u  \circ f$. The roots $0$ and $-4$ of $f$ are simple, but $f^{\circ 2}(X)=X(X+4)(X+2)^2$ has a double root. In this case, $f$ is still semi-conjugate to an endomorphism of $\mathbf{G}_m$, but via the more complicated map $h(X)=X^2/(1+X)$ (see the discussion after corollary 3.2.1 of \cite{L94}, and example 2 of \cite{Li96}).
\end{rema}

\providecommand{\bysame}{\leavevmode ---\ }
\providecommand{\og}{``}
\providecommand{\fg}{''}
\providecommand{\smfandname}{\&}
\providecommand{\smfedsname}{\'eds.}
\providecommand{\smfedname}{\'ed.}
\providecommand{\smfmastersthesisname}{M\'emoire}
\providecommand{\smfphdthesisname}{Th\`ese}

\end{document}